\newtheorem{thm}{Theorem}[section]
\newtheorem{conj}{Conjecture}[section]
\newtheorem{prop}[thm]{Proposition}
\newtheorem*{definition*}         {Definition}
\newtheorem{cor}[thm]{Corollary}
\theoremstyle{remark}
\newcommand*{\Oo}{\mathcal{O}}
\newcommand*{\Q}{\mathbb{Q}}
\newcommand*{\Hh}{\mathbb{H}}
\newcommand*{\A}{\mathcal{A}}
\newcommand*{\Qa}{\overline{\mathbb{Q}}}
\newcommand*{\Z}{\mathbb{Z}}
\newcommand*{\R}{\mathbb{R}}
\newcommand*{\C}{\mathbb{C}}
\newcommand*{\Disc}{\textrm{Disc}}
\newcommand*{\Gal}{\textrm{Gal}}
\DeclareMathOperator{\spec}{Spec}
\DeclareMathOperator{\fal}{Fal}
\newcommand*{\ra}{\rightarrow}
\newcommand*{\ol}{\overline}
\def\Sp{{\rm Sp}}
\def\Disc{{\rm Disc}}
\author{Jacob Tsimerman}
\address{Department of Mathematics, University of Toronto, Canada}
\begin{document}

\begin{abstract}
We give a proof of the Andr\'e-Oort conjecture for $\mathcal{A}_g$ - the moduli space of principally polarized abelian varieties. In particular, we show that a recently proven `averaged' version of the Colmez conjecture yields lower bounds for Galois orbits of CM points. The Andr\'e-Oort conjecture then follows
from previous work of Pila and the author.
\end{abstract}

\title{A proof of the Andr\'e-Oort conjecture for $\mathcal{A}_g$}
\maketitle
\section{Introduction}

Recall the statement of the Andr\'e-Oort conjecture: 

\begin{conj}\label{AO}
Let $S$ be a Shimura variety, and let $V$ be an irreducible closed algebraic subvariety of $S$.
Then $V$ contains only finitely many
maximal special subvarieties.
\end{conj}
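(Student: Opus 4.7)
The plan is to apply the Pila-Zannier paradigm for Shimura varieties, which reduces Conjecture~\ref{AO} to the combination of (i) definability of the uniformization map $\pi: X \to S$ (with $X$ a Hermitian symmetric domain) when restricted to a Siegel fundamental domain $\Sigma \subset X$, in the o-minimal structure $\mathbb{R}_{\mathrm{an},\mathrm{exp}}$; (ii) an Ax-Lindemann theorem describing the bi-algebraic subvarieties of $X$ with respect to $\pi$; and (iii) polynomial lower bounds for the Galois orbit of a special point $s \in S$ in terms of a ``discriminant'' measuring its complexity. Granted these three inputs, one argues by contradiction: a counterexample $V$ would contain infinitely many maximal special subvarieties, hence infinitely many CM points, whose preimages in $\Sigma$ are algebraic points of bounded height and denominator. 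The Pila-Wilkie counting theorem applied to the definable set $\pi^{-1}(V) \cap \Sigma$, played off against the lower bound (iii), forces $\pi^{-1}(V) \cap \Sigma$ to contain a positive-dimensional semialgebraic set, which by Ax-Lindemann must lie in a proper weakly special subvariety $V' \subsetneq V$. An induction on $\dim V$, combined with a monodromy argument promoting maximal weakly special components to special ones, then closes the loop.

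\textbf{Reduction to $\mathcal{A}_g$.} To deploy this machinery at the level of generality of Conjecture~\ref{AO}, I would reduce to Galois orbit bounds on $\mathcal{A}_g$ for $g$ varying. Any Shimura variety of abelian type arises from one of Hodge type via a central isogeny of reductive groups, and a standard diagram chase transfers all three ingredients along such a correspondence. A Shimura variety $S$ of Hodge type admits a finite morphism of Shimura varieties $S \to \mathcal{A}_{g,N}$ for suitable $g,N$; ingredients (i) and (ii) pull back from the known statements of Pila and the author on $\mathcal{A}_g$, and (iii) reduces to the analogous estimate on $\mathcal{A}_g$ since the fields of definition of a special point and its image differ by a bounded factor. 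Shimura varieties not of abelian type (those associated to certain exceptional groups $E_6, E_7$ and specific mixed real forms) fall outside the Colmez framework and would require an independent source of Galois orbit bounds.

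\textbf{Main obstacle.} The crux, and the only genuinely new input needed, is (iii) for $\mathcal{A}_g$: for a CM abelian variety $A$ of dimension $g$ with CM by an order in a CM field $E$ of degree $2g$, one must show $[\Q(A):\Q] \gg_g |\Disc(E)|^\delta$ for some absolute $\delta > 0$. The idea is to extract this from Faltings-height data: using Masser-W\"ustholz isogeny estimates and Arakelov geometry on $\mathcal{A}_g$, one shows that a small Galois orbit for $A$ would force a small value of $h_{\fal}(A)$. One then invokes the recently proved \emph{averaged} form of the Colmez conjecture of Andreatta-Goren-Howard-Madapusi Pera and of Yuan-Zhang, which expresses the average of $h_{\fal}$ over all CM types of $E$ in terms of logarithmic derivatives of Artin $L$-functions attached to $E$. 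The delicate point is that the size of a single Galois orbit a priori depends on an individual CM type, while averaged Colmez controls only the average; however, Galois conjugation sweeps through all CM types for $E$, so the sum of Faltings heights over a full Galois orbit is precisely what the averaged statement bounds. Combining this observation with convexity (or standard Brauer-Siegel style) estimates for the relevant Artin $L$-values should yield the polynomial lower bound, and feeding this into the Pila-Zannier reduction above then delivers the conjecture.
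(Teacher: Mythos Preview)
The statement you are attempting to prove is the full Andr\'e-Oort conjecture for an arbitrary Shimura variety $S$; in the paper this is a \emph{conjecture}, not a theorem. The paper establishes only the case $S=\mathcal{A}_g$ (Theorem~\ref{AOAG}), and notes Gao's extension to mixed Shimura varieties whose pure part embeds in $\mathcal{A}_g$. You yourself concede that your reduction goes through only for Shimura varieties of abelian type and that the exceptional types ``would require an independent source of Galois orbit bounds''; so your proposal is not a proof of Conjecture~\ref{AO} as stated, but at best of its abelian-type case, which is more than the paper claims.

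Restricted to $\mathcal{A}_g$, your outline of the Pila--Zannier strategy matches the paper, but your account of how the Galois-orbit lower bound (iii) is extracted from averaged Colmez is garbled. You write that ``a small Galois orbit for $A$ would force a small value of $h_{\fal}(A)$'' and that ``Galois conjugation sweeps through all CM types for $E$, so the sum of Faltings heights over a full Galois orbit is precisely what the averaged statement bounds.'' Neither of these is the mechanism. The actual argument is: (a) by Brauer--Siegel, $|S(E,\Phi)|\gg|\Disc E|^{1/4-o(1)}$, so two elements of $S(E,\Phi)$ have minimal isogeny of degree $\gg|\Disc E|^{1/4-o(1)}$; (b) averaged Colmez bounds $\sum_\Phi h_{\fal}(E,\Phi)$ from above, and Bost's uniform \emph{lower} bound $h_{\fal}\geq c_g$ on the remaining summands converts this into an individual upper bound $h_{\fal}(E,\Phi)\leq|\Disc E|^{o(1)}$ for each fixed $\Phi$ --- no averaging over a Galois orbit is invoked; (c) Masser--W\"ustholz gives $N\ll\max(h_{\fal},[\Q(A):\Q])^{c_g}$, and since $h_{\fal}$ is small while $N$ is large, $[\Q(A):\Q]$ must be large. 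Your implication (small orbit $\Rightarrow$ small height) does not follow from Masser--W\"ustholz, and your Galois-orbit averaging idea is neither used nor obviously correct, since a single Galois orbit of CM types need not exhaust all $2^g$ CM types of $E$.
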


In the past few decades, there has been an enormous amount of work on the Andr\'e-Oort  conjecture. Notably, a full proof of AO
under the assumption of the Generalized Riemann Hypothesis (GRH) for CM fields
has been given by Klingler, Ullmo, and Yafaev \cite{KYAO, UYAO}.

Following a strategy introduced by Pila and Zannier, in previous work \cite{PT} it was shown that the Andr\'e-Oort conjecture for coarse the moduli space of principally polarized abelian varieties $\mathcal{A}_g$ follows once one establishes that the sizes of the Galois orbits of special points are `large'. This is what we prove in this paper. More specifically, we prove the following 

\begin{thm}\label{lowerprimtop}

Then there exists $\delta_g>0$ such if $\Phi$ be a primitive CM type for a CM field $E$, and if $A$ is any abelian variety of dimension $g$ with endomorphism ring equal to the full ring of integer $\Oo_E$ and CM type $\Phi$, then the field of moduli\footnote{By `field of moduli' here we mean the intersection of all fields over which $A$ has a model, or alternatively the field over which the point $A$ is defined in the moduli space} $\Q(A)$ of $A$ satisfies
$$[\Q(A):\Q] \gg |\Disc(E)|^{\delta_g}.$$

\end{thm}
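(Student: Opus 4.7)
The strategy is to combine the theory of complex multiplication, which reduces bounding $[\Q(A):\Q]$ to a polynomial lower bound on a class number, with the averaged Colmez conjecture, which serves as an effective replacement for Siegel-type analytic inputs. By Shimura's reciprocity law, the Galois orbit of $A$ in $\A_g(\Qa)$ has size $[E^*:\Q] \cdot |\mathrm{Im}(N_{\Phi^*})|$, where $E^*$ is the reflex field and $N_{\Phi^*}:\mathrm{Cl}(\Oo_{E^*}) \to \mathrm{Cl}(\Oo_E)$ the reflex type norm. A standard CM-theoretic calculation shows that $\mathrm{Im}(N_{\Phi^*})$ contains the kernel of $N_{E/F}:\mathrm{Cl}(\Oo_E) \to \mathrm{Cl}(\Oo_F)$, so $[\Q(A):E^*] \gg h(\Oo_E)/h(\Oo_F)$. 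The problem thus reduces to proving the effective lower bound $h(\Oo_E)/h(\Oo_F) \gg |\Disc(E)|^{\delta_g}$.

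The averaged Colmez conjecture, recently proven by Andreatta--Goren--Howard--Madapusi Pera and independently by Yuan--Zhang, gives the identity
\[
\frac{1}{2^g}\sum_{\Phi'} h_{\fal}(A_{\Phi'}) = -\frac{1}{2}\frac{L'(0,\chi_{E/F})}{L(0,\chi_{E/F})} - \frac{1}{4}\log\frac{|\Disc(E)|}{|\Disc(F)|^2},
\]
where the sum runs over all CM types of $E$ and $\chi_{E/F}$ is the quadratic Hecke character of the extension $E/F$. Combined with Bost's unconditional lower bound $h_{\fal} \geq -c_g$ on Faltings heights of CM abelian varieties, this yields an effective upper bound on $L'(0,\chi_{E/F})/L(0,\chi_{E/F})$ and hence, via the functional equation, on $L'(1,\chi_{E/F})/L(1,\chi_{E/F})$. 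By the analytic class number formula applied to $E$ and $F$, the sought-after lower bound on $h(\Oo_E)/h(\Oo_F)$ would follow from a polynomial lower bound on $L(1,\chi_{E/F})$.

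The \emph{main obstacle}, and the technical heart of the argument, is to convert the upper bound on $L'(1,\chi_{E/F})/L(1,\chi_{E/F})$ into a lower bound on $L(1,\chi_{E/F})$ itself. This conversion is subtle because of the possibility of Siegel zeros for $\chi_{E/F}$. The approach is a dichotomy in the spirit of Landau--Deuring: either $L(1,\chi_{E/F})$ is already bounded below by $|\Disc(E)|^{-\delta_g}$, or $\chi_{E/F}$ has a real zero close to $s=1$. In the latter case, the averaged Colmez bound on the logarithmic derivative at $s=0$ constrains the location of such a putative Siegel zero, and careful analysis shows that the zero cannot be close enough to $s=1$ to defeat our desired bound. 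It is precisely this effective input coming from the averaged Colmez conjecture that replaces the classically ineffective Siegel theorem, and which makes an unconditional polynomial lower bound on Galois orbits of CM points possible.
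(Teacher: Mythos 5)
Your proposal takes a genuinely different route from the paper, and the step you yourself identify as ``the technical heart'' is a gap that cannot be filled as described. From Bost's \emph{lower} bound on the averaged Faltings height, your displayed identity gives an \emph{upper} bound on $L'(0,\chi_{E/F})/L(0,\chi_{E/F})$, which after the functional equation becomes a \emph{lower} bound of the shape $L'(1,\chi_{E/F})/L(1,\chi_{E/F})\geq -\tfrac12\log|\Disc(E)|-O(g)$. But a Siegel zero $\beta$ near $1$ forces $L'(1,\chi)/L(1,\chi)$ to be large and \emph{positive} (roughly $1/(1-\beta)$), which is perfectly consistent with that lower bound; indeed a Siegel zero would make the averaged CM Faltings height large, in harmony with Bost. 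To exclude a Siegel zero along these lines you would need the reverse inequality, i.e.\ an a priori \emph{upper} bound on the CM Faltings height independent of Colmez, which is exactly what is unavailable. So the ``careful analysis'' you defer to does not exist; if it did, you would have an effective lower bound for $L(1,\chi_{E/F})$, i.e.\ effective Brauer--Siegel for CM fields, which is far beyond current technology and is not what the paper proves. A secondary gap: the assertion that $\mathrm{im}(N_{\Phi^*})$ contains $\ker\bigl(N_{E/F}\colon Cl(\Oo_E)\to Cl(\Oo_F)\bigr)$ is not a standard computation; lower-bounding the image of the reflex norm is precisely the hard combinatorial problem that was previously resolved only for small $g$ or Galois-generic fields, and the paper derives such a bound \emph{from} the theorem rather than using it as input.

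The paper sidesteps both issues. It accepts the ineffectivity of Brauer--Siegel, using it only to show $|S(E,\Phi)|=|Cl(E)|\gg |\Disc(E)|^{1/4-o_g(1)}$ (an ineffective constant is harmless since the theorem's implied constant is unspecified). Since only $n^{o(1)}$ ideals have norm $n$, two members $A,B$ of $S(E,\Phi)$ must have minimal isogeny degree at least $|\Disc(E)|^{1/4-o_g(1)}$, and all members share the field of moduli $K=\Q(A)$. The averaged Colmez theorem is used solely to produce the upper bound $h_{\fal}(A)\leq |\Disc(E)|^{o_g(1)}$: Bost's lower bound controls the other $2^g-1$ terms in the average, and the $L'/L$ and conductor terms on the right-hand side are subpolynomial by Brauer's induction theorem, Brauer--Siegel, and convexity. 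The decisive tool you never invoke is the Masser--W\"ustholz isogeny theorem, $N\ll_g \max(h_{\fal}(A),[K:\Q])^{c_g}$, which converts the height upper bound plus the isogeny-degree lower bound directly into $[K:\Q]\gg|\Disc(E)|^{1/(4c_g)-o(1)}$, with no effective $L$-function input at $s=1$ required.
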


Though it is explained in detail in \cite{PT} and other wonderful survey papers how the general strategy for Andr\'e-Oort works, in an effort to be as self-contained a possible we give a short sketch of the main story in section 6. The reader who is already familiar with this story or is only interested in the proof of Theorem \ref{lowerprimtop} may safely skip this section.

As the argument for Theorem \ref{lowerprimtop} is relatively short, let us first give a quick sketch of it.

\subsection{Proof sketch of Theorem \ref{lowerprimtop}}

Let $E$ be a CM field with totally real subfield $E_0$ and $\Phi$ be a CM type for $E$. Now let $S(E,\Phi)$ denote the set of complex abelian varieties $A$ with complex multiplication by $(\Oo_E,\Phi)$. That is, $\Oo_E$ acts on $A$, such that the induced representation of $E$ on $T_0A(\C)$ given by $\Phi$. Our starting point is the observation that everything in $S(E,\Phi)$ is mutually isogenous, and the field of moduli $K$ of all these abelian varieties is the same. Moreover, it is well known that there are at least $\Disc(E)^{1/4+o(1)}$ elements in $S(E,\Phi)$ --- we note that the exponent of $1/4$ is irrelevant for us, we only care that it is some positive constant.

Next, Colmez \cite{C} has proven that the Faltings heights of all the elements in $S(E,\phi)$ are the same. Moreover, he has conjectured a precise formula for the Faltings height. All that matters for our purposes is that his conjectural formula is subpolynomial in $\Disc(E)$. Now, while Colmez's conjecture is not yet proved, an averaged version has been announced by Andreatta-Goren-Howard-Madapusi Pera and more recently by Yuan-Zhang, and this is sufficient to establish our desired upper bound for the Faltings height.

Finally, the theorem of Masser-W\"ustholz now says that all the elements of $S(E,\phi)$ have isogenies between them of degree at most
$\newline\max(h_{\fal}, [K:\Q])^{c_g}$, where $h_{\fal}$ denotes their common Faltings height, and $c_g$ is some positive constant depending only on the degree $g$ of $E$. Since there are at most polynomially many isogenies of a given degree $N$, it follows that $[K:\Q]$ must grow at least polynomially in $\Disc(E)$. 

\subsection{Paper outline}

In Section 2 we gather the basic facts we need about Faltings heights, CM abelian varieties, and the theorem of Masser-W\"ustholz. In Section 3 we show how the average version of the Colmez conjecture yields upper bounds for Faltings heights of CM abelian varieties. In Section 4 we use these upper bounds, together with the theorem of Masser-W\"ustholz to prove our desired lower bounds on Galois orbits. In Section 5 we recall how these lower bounds imply the Andr\'e-Oort conjecture for $\A_g$. In Section 6, we give a brief sketch for the interested reader of the complete proof of Andr\'e-Oort, putting together the ingredients in the literature to explain the complete argument.

\subsection{Acknowledgements} It is a great pleasure to thank Jonathan Pila for asking the key question to inspire this work, and for pointing the author in
the direction of Colmez's conjecture, as well as several helpful discussions.
I am also grateful to Shou-Wu Zhang, Ben Howard and Stephen Kudla for helpful conversations. Thanks also to Peter Sarnak for a careful reading of an earlier version of this paper and making useful suggestions, and thanks to Bjorn Poonen, Arul Shankar and Ila Varma for carefully going through the paper and pointing out several errors and inconsistencies.

\section{Background}

\subsection{Faltings Height} Let $A$ be an abelian variety over $\Qa$. Let $K$ be any subfield of $\Qa$ such that $A$ is definable over $K$ with everywhere semi-stable reduction.
Now let $\pi:\mathcal{A}\ra\spec\Oo_K$ be the N\'eron model of $A$, and take $\omega$ to be any global section of $\mathcal{L}:=\pi_*\Omega^g_{\mathcal{A}/\spec\Oo_K}$. Then we define the Faltings height of $A$ as follows:

$$h_{\fal}(A):= \frac1{[K:\Q]}\left(\log|H^0(\spec\Oo_K,\mathcal{L}):(\Oo_K\cdot\omega)|-\frac12\sum_{\sigma:K\ra\C}\log\int_{A(\C)}\sigma(\omega\wedge\overline{\omega})\right).$$

This turns out to be a well defined quantity independent of the choice of $K$ and $\omega$. It can be thought of as a measure of the arithmetic complexity of $A$. We shall need the following theorem of Bost \cite{B}:

\begin{thm}\label{bost}
There exists a constant $c_g$ depending on $g$ alone such that for $A$ an abelian variety over $\Qa$ of dimension $g$, $h_{\fal}(A)\geq c_g$. In fact, one can take $c_g$ to be linear in $g$.
\end{thm}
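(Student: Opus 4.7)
The plan is to interpret $h_{\fal}$ as an Arakelov height on the moduli space and bound it below using modular forms. By Zarhin's trick $(A\times A^\vee)^4$ admits a principal polarization; since $h_{\fal}(A^\vee)=h_{\fal}(A)$ and the Faltings height is additive over products up to a constant depending only on $g$, it suffices to treat principally polarized $(A,\lambda)$. Such $A$ determines a point $[A]\in\A_g(\Qa)$, and the defining formula identifies $h_{\fal}(A)$ with the Arakelov-theoretic height of $[A]$ relative to the Hodge bundle $\mathcal L$ on $\A_g$ equipped with its $L^2$-metric (coming from the integration of $\omega\wedge\ol\omega$).

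The lower bound would then come from producing a nonzero section of a power $\mathcal L^{\otimes k}$. Concretely, for $k$ chosen linear in $g$ I would take a scalar-valued Siegel modular form $F$ of weight $k$ with integral Fourier coefficients --- for example a suitable product of theta nulls, or a theta series attached to an even unimodular lattice of rank $2k$. The corresponding section $s$ extends to the toroidal compactification $\ol{\A_g}$, and the standard height inequality for sections of line bundles gives, at any $[A]$ with $s([A])\ne 0$,
$$k\cdot h_{\fal}(A)\;\geq\;-\frac{1}{[K:\Q]}\sum_{\sigma:K\ra\C}\log\|s\|_{\mathrm{Pet}}([A]_\sigma)\;\geq\;-\log\sup_{\A_g(\C)}\|s\|_{\mathrm{Pet}},$$
where $\|\cdot\|_{\mathrm{Pet}}$ denotes the Petersson norm induced by the $L^2$-metric on $\mathcal L^{\otimes k}$. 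The vanishing locus of $s$ is a proper closed subvariety of $\A_g$, and points landing there are handled either by varying $F$ or by induction on the dimension of the stratum.

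The principal obstacle is the archimedean estimate $\sup\|s\|_{\mathrm{Pet}}<\infty$, together with the sharper requirement that $\log\sup\|s\|_{\mathrm{Pet}}$ grows only linearly in $g$ after dividing by $k$. For a cusp form this follows from the classical Fourier-expansion bound: each term $a_T\,e^{2\pi i\,\mathrm{tr}(T\tau)}$ with $T$ positive definite decays exponentially as $e^{-2\pi\,\mathrm{tr}(T\,\mathrm{Im}\,\tau)}$, which beats the polynomial growth $\det(\mathrm{Im}\,\tau)^{k/2}$ in the Petersson weight factor, so $\|s\|_{\mathrm{Pet}}$ is uniformly bounded on the Siegel fundamental domain of $\Sp_{2g}(\Z)$. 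For noncuspidal $F$ the same conclusion follows from continuity of the extended hermitian metric across the toroidal boundary divisor. Tracking the constants carefully through the construction of $F$ and through the boundary geometry gives the linear-in-$g$ dependence of $c_g$ claimed in the statement.
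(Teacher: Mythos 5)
The paper does not prove this statement at all: it is quoted as a black box from Bost \cite{B}, whose argument runs through the slope inequality / period method of Arakelov geometry and produces an explicit constant linear in $g$. So there is no in-paper proof to match; your proposal has to stand on its own, and as written it has a genuine gap at the archimedean step. The decisive error is the claim that for a noncuspidal form $F$ the bound $\sup\|s\|_{\mathrm{Pet}}<\infty$ "follows from continuity of the extended hermitian metric across the toroidal boundary divisor." The $L^2$-metric on the Hodge bundle does \emph{not} extend continuously to a toroidal compactification; it acquires logarithmic singularities along the boundary. Concretely, $\|s\|_{\mathrm{Pet}}(\tau)=|F(\tau)|\det(\mathrm{Im}\,\tau)^{k/2}$, and for a form with nonvanishing constant term (e.g.\ the theta series of $E_8$, which is otherwise an attractive choice since its weight $4$ is independent of $g$) this blows up like $\det(\mathrm{Im}\,\tau)^{k/2}$ as $\tau\to i\infty$. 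So the sup is infinite and the height inequality gives nothing for exactly the sections you propose to use.

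That forces you into the cuspidal case, where the sup-norm bound is indeed classical, but then two things are missing. First, for an \emph{arbitrary} point $[A]\in\A_g(\Qa)$ you must exhibit a cusp form, defined over $\Z$ (q-expansion principle on the Faltings--Chai integral model), of weight $k$ and not vanishing at $[A]$, with $\frac1k\log\sup\|s\|_{\mathrm{Pet}}=O(g)$; the standard explicit candidate, the product of the even theta constants, has weight $2^{g-2}(2^g+1)$, exponential in $g$, and nothing in the proposal controls $\sup\|s\|_{\mathrm{Pet}}^{1/k}$ quantitatively. Second, "varying $F$ or induction on the dimension of the stratum" does not parse: the zero locus of a cusp form is an interior divisor of $\A_g$, not a boundary stratum, so there is no stratification to induct on, and varying $F$ requires knowing that integral cusp forms of weight $O(g)$ generate $\mathcal L^{\otimes k}$ away from the boundary --- a base-point-freeness statement with uniform norm control that is the actual content of the theorem in this approach. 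The reduction via Zarhin's trick and $h_{\fal}(A^\vee)=h_{\fal}(A)$ (Raynaud) is fine, and the overall strategy (Faltings height as a modular height, bounded below by a normed section) is a legitimate alternative to Bost's period-theoretic proof; but the proof as proposed does not close.
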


\subsection{Complex Multiplication}

Let $E$ be a CM field with totally real subfield $E_0$, set $g=[E:\Q]$ and let $\Phi$ be a CM type of $E$. That is, $\Phi$ is a set of embeddings $E\hookrightarrow\C$ such that the set of all embeddings is given by the disjoint union of $\Phi$ and $\overline{\Phi}$. Next, define $S(E,\Phi)$ to be the set of isomorphism classes of $g$-dimensional complex abelian varieties $A$ together with an embedding $\Oo_E\ra \textrm{End}_{\C}(A)$ such that the induced action of $E$ on the tangent space $T_0A(\C)\cong \C^g$ is given by $\Phi$. It is known that the number of elements in $S(E,\Phi)$ is given by \cite[Propositions 7.17]{ST}:
$$|S(E,\Phi)| =|Cl(E)|$$
where $Cl(F)$ denotes the class number of a field $F$.

In general class numbers of algebraic number fields can be very small, but this is not so for CM fields.
The Brauer-Siegel theorem, the fact that the regulators of $E$ and $E_0$ are the same, and the fact that $|\Disc(E)|\geq |\Disc(E_0)|^2$ imply that 
$$|S(E,\Phi)| \gg_g \frac{|Cl(E)|}{|Cl(E_0)|}\gg_g\Disc(E)^{1/4-o_g(1)}.$$

Now, suppose that $\Phi$ is a primitive CM type, which means that it is not induced from a CM type of a strict CM subfield of $E$. Then $\textrm{End}_{\C}(A) \cong \Oo_E$ for any $A\in S(E,\Phi)$, so isogenies between abelian varieties in $S(E,\Phi)$ necessarily respect the $\Oo_E$ action. Finally, all the elements in $S(E,\Phi)$ are isogenous, and isogenies correspond to ideals in $\Oo_E$ in the following way:  given an ideal $I\subset\Oo_E$, let $T_I$ be the kernel of $I$ acting on $A$. Then the map $A\ra A/T_I$ is an isogeny between 2 elements in $S(E,\Phi)$ of degree the norm of $I$, and each isogeny arrises in this way. Since there are $n^{o(1)}$ ideals of norm $n$, we have proven

\begin{prop}\label{distinct}

For a primitive CM type $\Phi$, there are two elements in $S(E,\Phi)$ such that the lowest degree isogeny between them has degree at least $|\Disc(E)|^{1/4-o_g(1)}$.
\end{prop}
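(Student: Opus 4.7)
The plan is a pigeonhole argument pitting the Brauer--Siegel lower bound on $|S(E,\Phi)|$ against the sparse count of low-norm ideals in $\Oo_E$. Primitivity of $\Phi$ lets me identify $S(E,\Phi)$ with $Cl(E)$ via the fractional-ideal parametrization $A\leftrightarrow[\mathfrak{a}_A]$, so by the ideal--isogeny correspondence recalled in the excerpt the degrees of isogenies $A\to B$ in $S(E,\Phi)$ are exactly the norms $N(J)$ of integral ideals $J$ in the class $[\mathfrak{a}_A][\mathfrak{a}_B]^{-1}$, and isogenies $B\to A$ correspond to integral ideals in the inverse class. Hence the minimum degree of an isogeny between $A$ and $B$ in either direction equals $\min(m([\mathfrak{c}]),m([\mathfrak{c}]^{-1}))$, where $\mathfrak{c}=\mathfrak{a}_A\mathfrak{a}_B^{-1}$ and $m([\mathfrak{c}])$ denotes the minimum norm of an integral representative of the class $[\mathfrak{c}]$.

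Next I would set $N:=|\Disc(E)|^{1/4-\epsilon}$ for a small $\epsilon>0$ and perform the count. By the $n^{o_g(1)}$ bound on the number of ideals of norm $n$ quoted in the excerpt, the total number of integral ideals of $\Oo_E$ of norm at most $N$ is $N^{1+o_g(1)}$; since each such ideal lies in a unique ideal class, and accounting for the doubling coming from the passage $[\mathfrak{c}]\leftrightarrow[\mathfrak{c}]^{-1}$, the number of classes $[\mathfrak{c}]\in Cl(E)$ satisfying $\min(m([\mathfrak{c}]),m([\mathfrak{c}]^{-1}))\leq N$ is at most $2N^{1+o_g(1)}=|\Disc(E)|^{1/4-\epsilon+o_g(1)}$. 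For $\epsilon$ chosen strictly larger than the $o_g(1)$ slack in the lower bound $|S(E,\Phi)|\gg_g|\Disc(E)|^{1/4-o_g(1)}$ already established in the excerpt, this count is strictly smaller than $|S(E,\Phi)|$ once $|\Disc(E)|$ is sufficiently large.

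Fixing any $A_0\in S(E,\Phi)$, pigeonhole then produces some $B\in S(E,\Phi)$ for which the minimum isogeny degree between $A_0$ and $B$ in either direction exceeds $N=|\Disc(E)|^{1/4-\epsilon}$, and the pair $(A_0,B)$ witnesses the proposition. The only mild subtlety is handling both directions of isogeny simultaneously via the class/inverse-class bookkeeping; beyond that there is no serious technical obstacle, as the argument reduces to the clean tension between the divisor-like sparseness of low-norm ideals and the near-square-root lower bound for $|Cl(E)|$ for CM fields.
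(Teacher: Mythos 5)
Your proposal is correct and is essentially the paper's own argument: the paper's (very terse) proof is exactly the pigeonhole between $|S(E,\Phi)|=|Cl(E)|\gg_g|\Disc(E)|^{1/4-o_g(1)}$ and the $N^{1+o(1)}$ count of ideals of norm at most $N$ via the ideal--isogeny correspondence. You merely make explicit the class-group bookkeeping and the two directions of isogeny, which the paper leaves implicit.
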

\subsection{Masser-W\"ustholz Isogeny Theorem}

We shall make heavy use of the following theorem of Masser-W\"ustholz \cite{MW}:

\begin{thm}\label{MaW}

Let $A,B$ be abelian varieties of dimension $g$ over a number field $k$, and suppose that there exists an isogeny between them over $\C$. Then if we let $N$ be the minimal degree of an isogeny between them over $\C$, we have the bound

$$N\ll_g \textrm{Max}(h_{\fal}(A), [k:\Q])^{c_g}$$ where $c_g$ is a positive constant depending only on $g$.

\end{thm}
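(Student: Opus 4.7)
The plan is to follow the strategy of Masser and W\"ustholz, whose central technical tool is their \emph{period theorem}. The idea is to convert the question about minimal isogenies into a question about short periods of an appropriate abelian variety, and then to produce such a short period via transcendence methods on commutative algebraic groups.

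First, I would reformulate the problem in terms of abelian subvarieties. An isogeny $\phi \colon A\to B$ corresponds to its graph, an abelian subvariety of $A\times B$ projecting isomorphically onto $A$; conversely, any abelian subvariety of $A\times B$ finite over both factors yields, up to a bounded factor depending on its degrees, an isogeny from $A$ to $B$. Thus bounding the minimal isogeny degree reduces to bounding, with respect to a chosen product polarization on $A\times B$, the minimal degree of a nontrivial abelian subvariety projecting nontrivially to each factor.

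Next, I would invoke the period theorem: for an abelian variety $C$ of dimension $d$ over a number field $k$ with a fixed polarization of degree $D$, the period lattice of any uniformization $\C^d\to C(\C)$ contains a nonzero element $\omega$ of length $\|\omega\|\ll_d \max(h_{\fal}(C),[k:\Q],D)^{c_d}$. Such a short period generates a complex subspace which, by a linear-algebraic descent on the successive minima of the period lattice, cuts out an abelian subvariety of controlled degree. Applied to $C = A\times B$ with a product polarization and iterated through the resulting quotients, this procedure produces an isogeny between $A$ and $B$ whose degree satisfies the bound claimed.

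The main obstacle --- and the genuinely deep ingredient --- is the period theorem itself. Its proof combines Baker-type linear independence estimates on commutative algebraic groups, in the tradition of Philippon and Waldschmidt, with sharp zero estimates on algebraic groups due to Masser and W\"ustholz. Without such transcendence input, Faltings-style methods yield only non-effective or much weaker polynomial bounds, which would be insufficient for the polynomial-in-$\Disc(E)$ estimate required for Theorem \ref{lowerprimtop}.
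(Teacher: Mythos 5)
The paper does not actually prove this statement: it is imported as a black box from Masser--W\"ustholz \cite{MW} (see the remark immediately following the theorem in the text), so there is no internal proof to compare yours against. Your sketch correctly reproduces the strategy of the cited proof: reducing the minimal isogeny degree to the minimal degree of an abelian subvariety of $A\times B$ under a product polarization, then controlling that degree via the period theorem, whose proof in turn rests on quantitative transcendence on commutative algebraic groups (Baker--Philippon--Waldschmidt-type lower bounds together with the Masser--W\"ustholz zero estimates). As a proof it is of course only an outline --- the period theorem, which you rightly identify as the entire content, is invoked rather than established, and the descent from a single short period to an actual isogeny of bounded degree (keeping track of the polarization degrees of the intermediate quotients) is where most of the technical work in \cite{MW} lives. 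Treating the theorem as a citation, exactly as the paper does, is the appropriate move here; nothing in your outline contradicts the source, but nothing in it could substitute for it either.
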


This theorem is proved in \cite{MW}, even though its stated a little differently. See the remark on the bottom of page 23.

\section{Colmez's conjecture}

It is a theorem of Colmez \cite{C} that all the elements in $S(E,\Phi)$ have the same Faltings height denoted by $h_{\fal}(E,\Phi)$. Moreover, he conjectured a precise formula for this height as follows:

\begin{conj}\label{colmez}	

We have the identity

$$h_{\fal}(E,\Phi) = \sum_{\rho} c_{\rho,\Phi}\left(\frac{L'(0,\rho)}{L(0,\rho)} + \log f_{\rho}\right)$$ where $\rho$ ranges over irreducible complex representations of the Galois group of the normal closure of $E$ for which $L(0,\rho)$ does not vanish, $c_{\rho,\Phi}$ are rational numbers depending only on the finite combinatorial data given by $\Phi$ and the Galois group of the normal closure of $E$, and $f_{\rho}$ is the Artin conductor of $\rho$.

\end{conj}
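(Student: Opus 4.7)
The plan is to establish the identity by relating the Faltings height of a CM abelian variety to arithmetic intersection numbers on integral models of Shimura varieties. I should note at the outset that Conjecture~\ref{colmez} remains open in full generality; the sketch below describes the strategies that have succeeded either in the special abelian setting (Colmez's original paper) or in its averaged form (the Yuan--Zhang and Andreatta--Goren--Howard--Madapusi Pera theorems), the latter being what the rest of the paper actually uses.

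First I would verify the formula in the abelian case, i.e.\ when the Galois closure of $E$ is an abelian extension of $\Q$. Here the classical Chowla--Selberg formula, together with its generalizations by Anderson and Colmez, expresses periods of CM abelian varieties in terms of special values of the Gamma function at rational arguments. Taking logarithms and using Lerch's formula to rewrite $\log\Gamma$ values as derivatives of Hurwitz zeta functions, one expands the period as a linear combination of terms $L'(0,\chi)/L(0,\chi) + \log f_{\chi}$ over Dirichlet characters $\chi$ cut out by subfields of the cyclotomic field containing $E$. This both proves the identity for abelian $E$ and explains the source of the combinatorial coefficients $c_{\rho,\Phi}$.

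For general (non-abelian) $E$, I would follow the geometric strategy pioneered by Kudla--Rapoport--Yang and pushed through by Yuan--Zhang: embed the zero-dimensional Shimura variety parametrizing CM abelian varieties of type $(E,\Phi)$ as a special cycle inside a larger Shimura variety of orthogonal or unitary type of signature $(n,2)$, arranged so that its intersection with Kudla--Rapoport special divisors is accessible. One then identifies the arithmetic degree of these intersections with Fourier coefficients of the central derivative of an incoherent Eisenstein series via the arithmetic Siegel--Weil formula. The constant Fourier coefficient decomposes as a sum of Faltings heights over all CM types of $E$, and matching it against the Eisenstein derivative produces the averaged version of the conjecture, which is the identity one obtains after summing over $\Phi$.

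The main obstacle is the passage from the averaged to the individual statement. The ambient Shimura varieties in the geometric setup see the CM field $E$ as a whole rather than a specific CM type $\Phi$, so they naturally give access only to averages over $\Phi$. Isolating a single $\Phi$ would presumably require either a finer decomposition of the orthogonal or unitary Shimura variety keyed to $\Phi$, or a direct harmonic-analytic calculation of individual periods, both of which appear out of reach with current techniques. Fortunately, for the application in Theorem~\ref{lowerprimtop} only a subpolynomial upper bound on $h_{\fal}(E,\Phi)$ is required, and since $h_{\fal}\geq c_g$ by Theorem~\ref{bost}, the averaged identity transfers to an individual upper bound with essentially no loss.
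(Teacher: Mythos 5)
The statement you were asked about is a conjecture, not a theorem: the paper states it as open and offers no proof, relying only on the averaged version (Theorem~\ref{colmeza}) together with Bost's lower bound to deduce Corollary~\ref{colmezheightbound}. Your write-up correctly recognizes this, accurately summarizes the known partial results (the abelian case via Chowla--Selberg/Lerch and the averaged case via Yuan--Zhang and Andreatta--Goren--Howard--Madapusi Pera), and correctly identifies that only the averaged bound is needed downstream, which matches the paper's treatment exactly.
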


While this conjecture is still open, a recent `averaged' version is a work in progress by Andr\'eatta, Goren, Howard and Madapusu-Pera \cite{AGHM}( for their analogue in the orthogonal case, see http://arxiv.org/abs/1504.00852), while the version without
the error term stated below has been recently announced by Yuan and Zhang \cite{YZ}. 

\begin{thm}\label{colmeza}

Colmez's conjecture holds if one averages over all CM types, up to a small error. Precisely:

$$\sum_{\Phi}h_{\fal}(E,\Phi) = \sum_{\Phi} \left(\sum_{\rho} c_{\rho,\Phi}\left(\frac{L'(0,\rho)}{L(0,\rho)} + \log f_{\rho}\right)\right) + O_g(\log|\Disc(E)|)$$ where the outer sum is over all $2^g$ CM types of $E$. Moreover, the error term can be written as a sum of rational multiples of logarithms of primes dividing $2\Disc(E)$.
\end{thm}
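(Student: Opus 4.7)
The plan is to reinterpret the left-hand side $\sum_{\Phi} h_{\fal}(E,\Phi)$ as an arithmetic intersection number on an integral model of an auxiliary Shimura variety and to evaluate it via the Kudla program. The point of averaging over all CM types is geometric: a single CM type $\Phi$ gives only an isolated ``small'' CM point in $\A_g$, but the collection $\{(E,\Phi)\}_\Phi$ organizes itself into a ``big'' CM $0$-cycle $Z$ on a GSpin (or unitary) Shimura variety $M$ attached to the rational quadratic space $(E,\mathrm{Tr}_{E/\Q})$. By comparing the Hodge bundle on $\A_g$ with the tautological metrized line bundle $\widehat{\omega}$ on $M$, one identifies $\sum_{\Phi} h_{\fal}(E,\Phi)$ with the arithmetic degree $\widehat{\deg}(\widehat{\omega}|_Z)$, up to elementary corrections involving $\log|\Disc(E)|$.

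Next I would evaluate $\widehat{\deg}(\widehat{\omega}|_Z)$ via an arithmetic Siegel--Weil identity. Pairing $\widehat{\omega}$ against $Z$ is expressed as the central derivative at $s=0$ of an incoherent Eisenstein series on the relevant dual group. The non-archimedean Fourier coefficients of this Eisenstein series factor as products of local Whittaker derivatives, which at the good primes are identified with arithmetic local intersection multiplicities by the Kudla--Rapoport local conjecture. Global class field theory then repackages the resulting derivatives as rational linear combinations of $L'(0,\rho)/L(0,\rho)+\log f_{\rho}$, where $\rho$ ranges over the irreducible representations of $\Gal$ of the normal closure of $E$; a book-keeping check shows that the coefficients that come out coincide with Colmez's $\sum_{\Phi}c_{\rho,\Phi}$, which is the averaged form of his conjectural combinatorial prediction.

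The main obstacle, and the reason an error term persists, is the analysis at primes $p \mid 2\Disc(E)$. At these bad places the integral model of $M$ is not known to be regular, the $\Oo_E$-deformation theory of $p$-divisible groups has extra strata, and the Kudla--Rapoport local identification between arithmetic multiplicities and analytic Whittaker derivatives need not hold on the nose. The crucial saving is that each local discrepancy is a length of an Artinian local ring times $\log p$, hence a rational multiple of $\log p$; summing over the finitely many bad primes yields a total defect of the exact form claimed in the theorem, namely $O_g(\log|\Disc(E)|)$ written as a rational combination of $\log p$ for $p\mid 2\Disc(E)$. Carrying out these bad-prime estimates carefully is the technical heart of \cite{AGHM}; the version without error term announced by Yuan--Zhang in \cite{YZ} sidesteps this difficulty by transferring the problem via Jacquet--Langlands to quaternionic Shimura curves, whose integral geometry at bad primes is accessible by direct means.
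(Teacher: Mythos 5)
The first thing to say is that the paper does not prove Theorem \ref{colmeza} at all: it is imported as a black box from \cite{AGHM} and \cite{YZ}, with the surrounding text explicitly attributing the statement (with error term) to Andreatta--Goren--Howard--Madapusi Pera and the error-free version to Yuan--Zhang. So there is no internal argument to compare yours against; the only question is whether your outline is an accurate and adequate substitute for those references. As a description of the strategy of \cite{AGHM} it is broadly faithful: the big CM $0$-cycle on a GSpin Shimura variety attached to a quadratic space built from $E$ (more precisely $(E,\mathrm{Tr}_{E/\Q}(\xi x\bar{x}))$ for a suitable $\xi$, of signature $(2g-2,2)$, not the plain trace form), the identification of $\sum_{\Phi}h_{\fal}(E,\Phi)$ with the arithmetic degree of the metrized tautological bundle along that cycle, the arithmetic Siegel--Weil/Bruinier--Kudla--Yang evaluation via the central derivative of an incoherent Eisenstein series, and the fact that the residual error is supported on primes dividing $2\Disc(E)$ and is a rational combination of their logarithms. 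Your closing remark that Yuan--Zhang instead work through quaternionic Shimura curves is also correct.

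As a proof, however, what you have written is a roadmap in which every load-bearing step is itself a major theorem that you do not supply: the comparison of the Hodge bundle on $\A_g$ with $\widehat{\omega}$ on $M$ ``up to elementary corrections''; the arithmetic Siegel--Weil identity itself (which in \cite{AGHM} rests on Borcherds products and the arithmetic theory of special divisors rather than on the Kudla--Rapoport conjecture, which concerns unitary Rapoport--Zink spaces and was not available in this setting); the ``book-keeping check'' matching the Eisenstein coefficients with $\sum_{\Phi}c_{\rho,\Phi}$ (in the averaged case the entire right-hand side collapses to elementary terms plus a single logarithmic derivative $L'(0,\eta)/L(0,\eta)$ for the quadratic character $\eta$ of $E/E_0$ --- a nontrivial computation with Colmez's coefficients that deserves to be stated rather than waved at); and the bad-prime analysis, which you yourself label ``the technical heart of \cite{AGHM}.'' Deferring that heart to the reference is exactly what the paper under review does, so the honest conclusion is that this statement should be cited, not proved; your text is a useful gloss on the citation but does not constitute an independent proof.
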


We shall be interested in the following corollary:

\begin{cor}\label{colmezheightbound}

We have the bound $$h_{\fal}(A)\leq|\Disc(E)|^{o_g(1)}.$$

\end{cor}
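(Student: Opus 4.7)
The plan is to combine the averaged Colmez formula (Theorem~\ref{colmeza}) with Bost's lower bound (Theorem~\ref{bost}) to isolate a single Faltings height on the left-hand side, then verify that each term on the right-hand side of the Colmez formula is subpolynomial in $|\Disc(E)|$.

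First, I would invoke Theorem~\ref{bost} to obtain $h_{\fal}(E,\Phi') \geq c_g$ for every CM type $\Phi'$ of $E$. Subtracting this bound for each of the $2^g - 1$ types $\Phi' \neq \Phi$ from the averaged identity of Theorem~\ref{colmeza} gives
$$h_{\fal}(E,\Phi) \;\leq\; \sum_{\Phi'}\sum_\rho c_{\rho,\Phi'}\!\left(\frac{L'(0,\rho)}{L(0,\rho)} + \log f_\rho\right) + O_g\!\left(\log|\Disc(E)|\right),$$
so it suffices to bound the right-hand side by $O_g(\log|\Disc(E)|)$, which is certainly $|\Disc(E)|^{o_g(1)}$.

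Next, I would dispatch the combinatorial pieces of this sum. The representations $\rho$ are irreducible complex representations of $\Gal(\tilde E/\Q)$, where $\tilde E$ is the normal closure of $E$; since $[\tilde E:\Q] \leq (2g)!$, the number of such $\rho$ is $O_g(1)$, and the rational coefficients $c_{\rho,\Phi'}$ depend only on the combinatorics of $\Phi'$ and this Galois action, hence also lie in a finite set depending only on $g$. The Artin conductor $f_\rho$ divides a power of $\Disc(\tilde E)$, and $\log \Disc(\tilde E) = O_g(\log|\Disc(E)|)$, so the $\log f_\rho$ contributions total $O_g(\log|\Disc(E)|)$.

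The main obstacle is bounding the logarithmic derivative $L'(0,\rho)/L(0,\rho)$ for each $\rho$ with $L(0,\rho)\neq 0$. I would use the functional equation $\Lambda(s,\rho) = \varepsilon(\rho)\Lambda(1-s,\bar\rho)$ to express $L'(0,\rho)/L(0,\rho)$ in terms of $L'(1,\bar\rho)/L(1,\bar\rho)$, a gamma-factor contribution, and a $\log f_\rho$ term, the latter two of size $O_g(\log|\Disc(E)|)$. For the logarithmic derivative at $s=1$ I would apply Brauer induction to decompose $L(s,\rho)$ as a product of Hecke $L$-functions attached to characters of intermediate subfields of $\tilde E$, and then invoke standard contour-integration estimates against the functional equation to bound each such $L'(1)/L(1)$ by $O_g(\log|\Disc(E)|)$. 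Putting everything together yields $h_{\fal}(E,\Phi) = O_g(\log|\Disc(E)|)$, and the corollary follows because every $A \in S(E,\Phi)$ has Faltings height equal to $h_{\fal}(E,\Phi)$ by Colmez's theorem.
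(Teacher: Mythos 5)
Your overall architecture matches the paper's: use Bost's lower bound to peel off the $2^g-1$ unwanted terms from the averaged Colmez identity, control $\log f_\rho$ and the coefficients $c_{\rho,\Phi'}$ combinatorially, and move from $s=0$ to $s=1$ via the functional equation. Up to that point everything is fine.

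The gap is in your final analytic step: the claim that ``standard contour-integration estimates against the functional equation'' give $L'(1,\bar\rho)/L(1,\bar\rho) = O_g(\log|\Disc(E)|)$. Unconditionally, no such bound is known: the logarithmic derivative at $s=1$ picks up a term $1/(1-\beta)$ from any zero $\beta$ near $s=1$, and the possible existence of a Siegel-type zero of one of the Hecke $L$-functions appearing in the Brauer decomposition cannot be excluded. A bound of the shape $O(\log q)$ for $L'/L(1)$ is essentially a GRH-strength (or at least ``no exceptional zero'') statement. The correct unconditional route, and the one the paper takes, is to bound numerator and denominator \emph{separately}: first, Brauer induction writes $L(s,\bar\rho)$ as a product of quotients of Hecke $L$-functions, and the Brauer--Siegel/Siegel theorem gives the ineffective lower bound $|L(1,\bar\rho)| \gg |\Disc(E)|^{-o_g(1)}$; second, Cauchy's integral formula over a small circle about $s=1$ together with the convexity bound for $L(s,\bar\rho)$ gives $|L'(1,\bar\rho)| \leq |\Disc(E)|^{o_g(1)}$. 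The quotient is then only $|\Disc(E)|^{o_g(1)}$ rather than $O_g(\log|\Disc(E)|)$ --- which is weaker than what you assert, but exactly what the corollary requires. Note also that this step is inherently ineffective (Siegel's theorem), so your stronger conclusion $h_{\fal}(E,\Phi) = O_g(\log|\Disc(E)|)$ should be retracted in favor of the stated $|\Disc(E)|^{o_g(1)}$.
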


\begin{proof}

By Theorem \ref{bost} we have $$h_{\fal}(A)\leq -(2^g-1)c_g+\sum_{\Phi}h_{\fal}(E,\Phi) $$ and thus it suffices to show that 
$$\sum_{\Phi}h_{\fal}(E,\Phi) \leq|\Disc(E)|^{o_g(1)}.$$ We shall do this by showing that every term on the right hand side of theorem \ref{colmeza} is bounded above by 
$\Disc(E)|^{o_g(1)}.$

Firstly, note that for any irreducible Artin representation $\rho$ we have $f_{\rho} \leq |\Disc(E)|$, and so  $$\log f_{\rho} \leq |\Disc(E)|^{o_g(1)}. $$ Next, note that if we logarithmically differentiate the functional equation for the Artin $L$-function, we obtain that $\frac{L'(1,\rho)}{L(1,\rho)} + \frac{L'(0,\ol{\rho})}{L(0,\ol{\rho})}  = O_g(\log f_{\rho})$, and thus it suffices to bound 
$\left|\frac{L'(1,\ol{\rho})}{L(1,\ol{\rho})}\right|$

By Brauer's theorem on induced characters, every Artin L-function is a product of quotients of Hecke L-functions, and so by the Brauer-Siegel theorem we have the estimate $$L(1,\ol{\rho})= |\Disc(E)|^{o_g(1)}.$$

Finally, by Cauchy's theorem we can express $L'(1,\ol{\rho})$ as an average of $L(s,\ol{\rho})$ over an arbitrary small circle centered around $1$ in the complex plane. The standard convexity estimate for $L(s,\ol{\rho})$ \cite[(5.2)]{IK} now yields $$L'(1,\ol{\rho})\leq |\Disc(E)|^{o_g(1)}.$$ This completes the proof.
\end{proof}

%
%
%
%
%
%
%
%
%
%
%
%
%

\section{Lower bounds for Galois orbits}

We are now ready to prove Theorem \ref{lowerprimtop}:

\begin{thm}\label{lowerprim}

Then there exists $\delta_g>0$ such that if $E$ is a CM field of degree $2g$,  $\Phi$ is a primitive CM type for $E$,  and $A$ is an abelian variety in $S(E,\Phi)$, then the field of moduli $\Q(A)$ of $A$ satisfies $$[\Q(A):\Q] \gg |\Disc(E)|^{\delta_g}.$$

\end{thm}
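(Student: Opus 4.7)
The plan is to chain together the three ingredients collected in the paper: Proposition \ref{distinct} (two elements in $S(E,\Phi)$ whose minimal isogeny is polynomially large in $|\Disc(E)|$), Corollary \ref{colmezheightbound} (Faltings height is subpolynomial in $|\Disc(E)|$), and the Masser--W\"ustholz isogeny theorem (Theorem \ref{MaW}), following exactly the outline given in the introduction sketch.

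First, I would pick $A, B \in S(E,\Phi)$ via Proposition \ref{distinct} so that any $\C$-isogeny $A \to B$ has degree $N \geq |\Disc(E)|^{1/4 - o_g(1)}$. Next, I would invoke CM theory (Shimura reciprocity and the transitivity of the Galois action on $S(E,\Phi)$ up to a bounded index) to argue that every element of $S(E,\Phi)$ has the same field of moduli $K := \Q(A)$, and is moreover definable over a uniformly bounded extension $k/K$ with $[k:\Q] \ll_g [K:\Q]$; the factor $O_g(1)$ here absorbs the usual finite gap between an abelian variety's field of moduli and an actual field of definition. In particular both $A$ and $B$ can be taken to live over a common such number field $k$.

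With this setup in hand, Theorem \ref{MaW} applied to $A$ and $B$ over $k$ gives
$$|\Disc(E)|^{1/4 - o_g(1)} \;\leq\; N \;\ll_g\; \max\bigl(h_{\fal}(A),\ [k:\Q]\bigr)^{c_g}.$$
Corollary \ref{colmezheightbound} bounds $h_{\fal}(A) \leq |\Disc(E)|^{o_g(1)}$, so for $|\Disc(E)|$ sufficiently large the maximum on the right must be $[k:\Q]$; this forces $[k:\Q] \gg_g |\Disc(E)|^{1/(4c_g) - o_g(1)}$, and since $[K:\Q] \gg_g [k:\Q]$ this gives $[K:\Q] \gg_g |\Disc(E)|^{\delta_g}$ for any $\delta_g < 1/(4 c_g)$, as required.

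I anticipate the only real (minor) obstacle to be the CM-theoretic bookkeeping in the middle step: namely, producing a single number field $k$ over which all elements of $S(E,\Phi)$ are defined and which satisfies $[k:\Q] \ll_g [\Q(A):\Q]$. This will require combining the standard description of the Galois action on $S(E,\Phi)$ through the reflex field with the standard fact that an abelian variety over $\Qa$ can be descended to a finite extension of its field of moduli of degree depending only on $g$. Everything else is a direct, short chain of the cited estimates.
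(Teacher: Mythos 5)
Your proposal is correct and follows essentially the same route as the paper: Proposition \ref{distinct} to produce two elements of $S(E,\Phi)$ with large minimal isogeny degree, Corollary \ref{colmezheightbound} to make the Faltings height subpolynomial, and Masser--W\"ustholz to force the degree of the field to be polynomially large. In fact you are slightly more careful than the paper on one point it elides --- passing from the common field of moduli to an actual field of definition of bounded relative degree before invoking Theorem \ref{MaW} --- but this is a refinement of the same argument, not a different one.
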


\begin{proof}

First note that the fields of moduli of all the elements in $S(E,\Phi)$ are the same by the main theorem of complex multiplication. Next, note that there are 2 elements $A,B$ in $S(E,\Phi)$ such that the minimal isogeny between them is of degree at least $\Disc(E)^{1/4+o(1)}$ by proposition \ref{distinct}. Thus applying Theorem \ref{MaW} and Corollary \ref{colmezheightbound} we learn that $[\Q(A):\Q]^{c_g}\geq \Disc(E)^{1/4-o(1)}$, which proves the result for any $\delta_g<\frac1{4c_g}$.

\end{proof}

\section{The Andr\'e-Oort Conjecture}

Recall the statement of the Andr\'e-Oort conjecture for $\A_g$: 

\begin{conj}\label{AO}
Let $V$ be an irreducible closed algebraic subvariety of $\A_g$.
Then $V$ contains only finitely many
maximal special subvarieties.
\end{conj}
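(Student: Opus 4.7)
The plan is to follow the Pila--Zannier strategy, which for $\A_g$ was already carried out conditionally in \cite{PT}; the only missing input was the Galois orbit lower bound supplied by Theorem \ref{lowerprimtop}. My proof would assemble the ingredients as follows.

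First, uniformize the complex points of $\A_g$ by the Siegel upper half space via the period map $\pi\colon\Hh_g\to\A_g(\C)$, fix a Siegel fundamental domain $F\subset\Hh_g$, and set $\tilde V=\pi^{-1}(V)\cap F$. By a theorem of Peterzil--Starchenko, the restriction $\pi|_F$ is definable in the o-minimal structure $\mathbb{R}_{\mathrm{an},\exp}$; in particular $\tilde V$ is a definable set. The key arithmetic facts about CM points that feed into the argument are: each CM point of $V$ lifts to a quadratic algebraic point of $F$, the heights of such lifts grow at most polynomially in $|\Disc(E)|$ (the polynomial height bound for CM points in a Siegel fundamental domain), and by Theorem \ref{lowerprimtop} their Galois orbits have size at least $|\Disc(E)|^{\delta_g}$.

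Next, apply the Pila--Wilkie counting theorem to the transcendental part of $\tilde V$: it contains fewer than $C_\epsilon T^\epsilon$ algebraic points of bounded degree and height at most $T$. Choosing $\epsilon$ smaller than $\delta_g$ divided by the exponent in the polynomial height bound, one sees that if $V$ contained infinitely many maximal special points, the resulting supply of Galois conjugates living in $\tilde V$ would eventually exceed this upper bound. For positive-dimensional maximal special subvarieties $Z\subseteq V$, one performs the same comparison after noting that $Z$ contains CM points with large Galois orbits, and deduces that the transcendental part of $\tilde V$ must contain positive-dimensional connected semi-algebraic blocks.

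Finally, apply the hyperbolic Ax--Lindemann theorem for $\A_g$, proved in \cite{PT}, to conclude that any such block has $\pi$-image contained in a weakly special subvariety of $\A_g$ that is itself contained in $V$. A monodromy/Galois argument — using that the weakly special subvariety contains a dense set of CM points whose conjugates also lie in $V$ — upgrades ``weakly special'' to ``special'', so that $V$ must contain a positive-dimensional special subvariety $W$. A Noetherian induction on $\dim V$, together with the analogous Andr\'e--Oort statement for the smaller Shimura varieties that arise, then finishes the argument. The main obstacle in this scheme has historically been the Galois orbit lower bound, which in the present paper is precisely Theorem \ref{lowerprimtop}; all the remaining ingredients (o-minimality of $\pi|_F$, Pila--Wilkie counting, polynomial height bounds for CM points, Ax--Lindemann for $\A_g$, and the monodromy upgrade from weakly special to special) are already established in \cite{PT} and the surrounding literature, so once Theorem \ref{lowerprimtop} is in hand the deduction of Conjecture \ref{AO} for $\A_g$ reduces to a formal combination of these ingredients.
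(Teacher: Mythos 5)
Your overall architecture is exactly the one the paper follows (and sketches in its Section 6): o-minimal definability of $\pi|_{F}$ after Peterzil--Starchenko, polynomial height bounds for lifts of CM points to the fundamental domain, Pila--Wilkie counting, hyperbolic Ax--Lindemann from \cite{PT}, and an induction to finish; indeed the paper simply invokes \cite[Theorem 7.1]{PT}, which packages all of these steps and reduces Conjecture \ref{AO} to a Galois orbit lower bound. So the strategy is the right one and is not genuinely different from the paper's.

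There is, however, one substantive step you elide. You apply Theorem \ref{lowerprimtop} to ``each CM point of $V$,'' but that theorem only covers abelian varieties with \emph{primitive} CM type and endomorphism ring equal to the \emph{full} ring of integers $\Oo_E$. A general special point $x\in\A_g$ can have an imprimitive CM type and an endomorphism ring whose center $R_x$ is a non-maximal order, and what the counting argument of \cite[Theorem 7.1]{PT} actually requires is the bound $|\Disc(R_x)|\ll_g|\Gal(\Qa/\Q)\cdot x|^{b_g}$ for \emph{all} special points, stated in terms of the same invariant $\Disc(R_x)$ that controls the height of the lift (the height bound of \cite[Theorem 3.1]{PT2} is also in terms of $\Disc(R_x)$, not of $\Disc(E)$). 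Bridging this gap is precisely the content of the paper's Theorem \ref{lower}: one uses the reciprocity map $r_{K,\phi}$ on class groups and \cite[Theorem 7.1]{JT} to reduce the general case to a purely field-theoretic lower bound for $|\mathrm{im}(r_{K,\phi})|$ with $\phi$ primitive, and then compares $|\mathrm{im}(r_{K,\phi})|$ with $\deg\Q(A)$ for $A\in S(K,\phi)$ (the two differ by a factor bounded in terms of $g$ alone, via an analysis of totally positive units). As written, your direct appeal to Theorem \ref{lowerprimtop} for every CM point is not justified; you need to either insert this reduction or restrict attention to special points of the relevant type and argue that the others are handled separately.
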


For a point $x\in\A_{g}(\overline{\Q})$ let $A_x$ denote the corresponding
$g$-dimensional
principally polarized abelian variety,
$R_x=Z({\rm End}(A_x))$ the centre of the endomorphism ring of $A_x$,
and ${\rm Disc\/}(R_x)$ its discriminant.
In general we have the following lower bound suggested by Edixhoven in
 \cite{EMO}.

\begin{thm}\label{lower}
Let $g\ge 1$. There exists a constant $b_g>0$ such that, for a special point $x\in\A_{g}$,
$$
|{\rm Disc\/}(R_x)| \ll_g |{\rm Gal}(\overline{\Q}/\Q)\cdot x|^{b_g}\
$$
(with the implied constants depending on $g$).
\end{thm}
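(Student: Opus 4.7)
The plan is to reduce Theorem \ref{lower} to Theorem \ref{lowerprim} by decomposing $A_x$ up to isogeny into simple CM factors and separately controlling the order structure of $R_x$. Using Poincar\'e reducibility, I first write $A_x\sim\prod_{i=1}^kA_i^{n_i}$ over $\ol{\Q}$, where each $A_i$ is simple with $\textrm{End}^0(A_i)\cong E_i$ a CM field via a CM type $\Phi_i$; simplicity forces each $\Phi_i$ to be primitive. The centre of the endomorphism algebra is then $\prod_i E_i$, and $R_x$ is an order in this product, giving the factorisation $|\Disc(R_x)|=[\prod_i\Oo_{E_i}:R_x]^2\cdot\prod_i|\Disc(E_i)|$. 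It therefore suffices to control each $|\Disc(E_i)|$ and the conductor index $[\prod_i\Oo_{E_i}:R_x]$ separately by a polynomial in $|\Gal(\ol{\Q}/\Q)\cdot x|$.

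For each simple factor, primitivity of $\Phi_i$ means Theorem \ref{lowerprim} applies to any element $A_i'\in S(E_i,\Phi_i)$, yielding $[\Q(A_i'):\Q]\gg|\Disc(E_i)|^{\delta_{g_i}}$ with $g_i=[E_i:\Q]/2$. To transfer this to the Galois orbit of $x$, I would observe that the isotypic factor $A_i^{n_i}$ is canonically determined by $A_x$ (as the image of the central idempotent for $E_i\subset\textrm{End}^0(A_x)$), so any $\sigma\in\Gal(\ol{\Q}/\Q(A_x))$ sends the isogeny class of $A_i$ to itself. Since all members of $S(E_i,\Phi_i)$ share a common field of moduli by the main theorem of complex multiplication, it follows that $\Q(A_i')\subseteq\Q(A_x)$ for an appropriate choice of $A_i'$. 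Taking the product over $i\le k\le g$ thus controls $\prod_i|\Disc(E_i)|$ polynomially in $|\Gal(\ol{\Q}/\Q)\cdot x|$.

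It remains to bound the conductor index $[\prod_i\Oo_{E_i}:R_x]$. The set of PPAVs with centre of endomorphism ring exactly $R_x$, fixed CM type, and fixed polarization type is a torsor for a Picard group $\textrm{Pic}(R_x)$, and by the main theorem of CM extended to non-maximal orders, the Galois orbit of $x$ fills out this torsor up to a quotient of size controlled by reflex norms. A Brauer-Siegel-type estimate for orders shows that $|\textrm{Pic}(R_x)|$ grows at least like $[\prod_i\Oo_{E_i}:R_x]^{1-o(1)}$ times $|Cl(\prod_i E_i)|$, so the conductor index itself is bounded by a power of $|\Gal(\ol{\Q}/\Q)\cdot x|$. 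Combining with the previous paragraph yields the desired bound $|\Disc(R_x)|\ll|\Gal(\ol{\Q}/\Q)\cdot x|^{b_g}$.

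The main obstacle I anticipate is this last step: extending the Picard-torsor description and the class-number lower bound cleanly to arbitrary orders in products of CM fields, and verifying that Galois acts essentially transitively on the $\textrm{Pic}(R_x)$-torsor (the class-field-theoretic kernel must be controlled by a polynomial in $\log|\Disc(R_x)|$). Some additional bookkeeping is needed to track the polarization type and the behaviour of the principal polarization under the isotypic decomposition into factors of possibly distinct types, but this only changes the final exponent $b_g$ by a bounded factor depending on $g$.
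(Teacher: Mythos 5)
Your overall strategy---decompose $A_x$ into simple CM factors, apply Theorem \ref{lowerprim} to each primitive factor, and handle the non-maximality of $R_x$ separately---is the right shape, and it is essentially the reduction that the paper outsources wholesale to \cite[Theorem 7.1]{JT}. The paper's own new content in this proof is different and smaller: it identifies $\deg\Q(A)$ with $|\textrm{im}(r'_{K,\phi})|$ up to a factor $O_g(1)$ (via the dual CM type, the subgroup $H$, and the totally positive units computation), so that Theorem \ref{lowerprimtop} directly supplies the class-group lower bound that \cite{JT} requires. Your proposal instead tries to reprove the \cite{JT} reduction from scratch, and two of its steps have genuine gaps.

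First, the containment $\Q(A_i')\subseteq\Q(A_x)$ does not follow from your argument. An element $\sigma\in\Gal(\ol{\Q}/\Q(A_x))$ preserves the isotypic factor $A_i$ only up to isogeny (the central idempotent lives in $\textrm{End}^0$, and $\sigma(A_x)\cong A_x$ only forces $\sigma(A_i)\sim A_j$ for some $j$). The field of moduli is an isomorphism-class invariant: $\Q(A_i')$ is the fixed field of the stabilizer of the isomorphism class of $A_i'$, and $\sigma$ may permute $S(E_i,\Phi_i)$ without fixing any member, so nothing forces $\sigma$ to fix $\Q(A_i')$ pointwise. (Separately, the idempotent image $A_i$ need not have endomorphism ring all of $\Oo_{E_i}$, so it need not lie in $S(E_i,\Phi_i)$ at all.) Second, and more seriously, the conductor step is a non sequitur as written: the Galois orbit of $x$ is the orbit under the \emph{image of the reflex norm} in $\textrm{Pic}(R_x)$, not under all of $\textrm{Pic}(R_x)$, so a Brauer--Siegel lower bound for $|\textrm{Pic}(R_x)|$ says nothing about $|\Gal(\ol{\Q}/\Q)\cdot x|$ until one bounds the cokernel of the reflex norm. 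That cokernel bound is precisely the hard content of \cite{JT} (and is where GRH entered older approaches); you have correctly flagged it as your ``main obstacle'' but offered no argument for it. Note how the paper's route avoids ever needing to compare the reflex-norm image to the full class group of $R_x$: the Masser--W\"ustholz/Colmez argument bounds the field-of-moduli degree---which \emph{is} (up to $O_g(1)$) the size of the reflex-norm image---directly from below. To make your write-up into a proof you would either need to import \cite[Theorem 7.1]{JT} explicitly, or supply the local surjectivity properties of the reflex norm on unit groups and the precise CM-theoretic description of the Galois action on non-maximal orders that that reduction rests on.
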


\begin{proof}
Let $K$ be a CM field of degree $g$ with a CM type $\phi$ and let $L$ be the normal closure of $\Q$. Then there is a reciprocity homomorphism between class groups $r_{K,\phi}: Cl(L)\ra CL(K)$ defined as follow: let $\phi_L$ denote the embeddings $L\ra \ol{K}$ extending all the embeddings in $\phi$. Then if $I$ is a fractional ideal of $L$, $r_{K,\phi}(I)$ is the unique fractional ideal in $K$ satisfying
$$r_{K,\phi}(I)\Oo_L = \prod_{\sigma\in\phi_L} \sigma(I).$$ In \cite[Theorem 7.1]{JT} it is shown that Theorem \ref{lower} follows from the following purely-fields theoretic statement:
There exists a positive constant $\delta(g)$ depending only on $g$ such that for any pair $(K,\phi)$ with $\phi$ a primitive CM-type,
\begin{equation}\label{classim}
|\textrm{im}(r_{K,\phi})|\gg_{g,\epsilon} \Disc(K)^{\delta(g)-\epsilon}.
\end{equation}

Moreover, for $A\in S(K,\phi)$, by \cite[\S15, Main Theorem 1]{ST}, $\deg\Q(A)$ and $|\textrm{im}(r_{K,\phi})|$ are almost the same size. Since we cannot find an adequate reference, we explain this point in some detail:

Let $(K^*,\phi^*)$ denote the dual CM field and dual CM type of $(K,\phi)$. Now, as can be checked, $r_{K,\phi}$ descends to a map $r'_{K,\phi}:Cl(K^*)\ra CL(K)$. Deifine a subgroup $H$ of ideal classes $[I]$ in $Cl(K^*)$ such that $r'_{K,\phi}(I')= (a)$ and $|\Oo_K/I| = a\ol{a}$ for some $a\in K^{\times}$.  Then $\deg\Q(A) = |Cl(K^*)|/|H|$. 

Now, clearly $H$ is contained in the kernel of $r'_{K,\phi}$. Moreover, for $[I]$ in the kernel of $r'_{K,\phi}$, we can find $a\in K^{\times}$ such that $r'_{K,\phi}(I')= (a)$.
This implies that $Nm(I) = r'_{K,\phi}(I)\ol{r'_{K,\phi}(I)} = a\ol{a}\Oo_K$, so that $\frac{a\ol{a}}{|\Oo_K/I|}$ is a totally positive unit of $K$. Moreover, $a$ is well defined up to an element of the units $U_K$, and so $\frac{a\ol{a}}{|\Oo_K/I|}$ is well defined up to an element of $U_K^+/N(U_K)$, where $U_K^+$ denote the totally positive units and $N(z)=z\ol{z}$. Therefore, $H$ is the kernel of the map from $\ker r'_{K,\phi}$ to
$U_K^+/N(U_K)$. This latter group is a quotient of $U_K^+/(U_K^+)^2$, which is smaller than $2^{g-1}$ times the number of roots of unity in $K$, and thus has size bounded purely in terms of $g$. Thus, we learn that the index of $H$ in the kernel of $r'_{K,\phi}$ is $O_g(1)$, and so
$$\deg\Q(A) = |Cl(K^*)|/|H| \ll_g |Cl(K^*)|/|\ker r'_{K,\phi}| = |\textrm{im}(r'_{K,\phi})|.$$

The theorem now follows from Theorem \ref{lowerprimtop} combined with the fact that $S(K,\phi)$ is not empty.

\end{proof}

In \cite[Theorem 7.1]{PT} it is proved that Theorem \ref{lower} implies the Andr\'e-Oort conjecture for $\mathcal{A}_g$. Thus, we obtain the following:

\begin{thm}\label{AOAG}

The Andr\'e-Oort Conjecture holds for $\mathcal{A}_g$ for any $g\geq 1$.
\end{thm}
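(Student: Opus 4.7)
The plan is to deduce Theorem \ref{AOAG} by combining Theorem \ref{lower} proved above with \cite[Theorem 7.1]{PT}. Theorem \ref{lower} supplies the polynomial lower bound
$$|\Disc(R_x)|\ll_g |\Gal(\ol{\Q}/\Q)\cdot x|^{b_g}$$
for the Galois orbit of a CM point $x\in\A_g$, while \cite[Theorem 7.1]{PT} asserts that any polynomial lower bound of this shape on Galois orbits of CM points in $\A_g$ implies the Andr\'e-Oort conjecture for $\A_g$. The proof therefore reduces to a single invocation of these two results.

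For orientation let me sketch what happens inside \cite{PT}. The argument runs the Pila-Zannier strategy on the complex uniformization $\pi:\Hh_g\to\A_g(\C)$ by the Siegel upper half space. A Siegel fundamental domain $\FF\subset\Hh_g$ is definable in the o-minimal structure $\R_{an,exp}$, and hence so is $\pi^{-1}(V)\cap\FF$ for any closed algebraic subvariety $V\subset\A_g$. One inducts on $\dim V$: assuming $V$ contains infinitely many maximal special subvarieties, the Galois conjugates of a CM point $x$ lying in them have fundamental domain representatives whose heights are polynomially bounded in $|\Disc(R_x)|$, so our Galois orbit lower bound lets the Pila-Wilkie counting theorem produce a positive dimensional semialgebraic arc inside $\pi^{-1}(V)\cap\FF$. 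Hyperbolic Ax-Lindemann type functional transcendence results (also established in \cite{PT}) then force this arc to lie in the preimage of a proper positive dimensional weakly special subvariety of $V$, and a monodromy argument combined with the induction hypothesis upgrades this to a contradiction with the maximality assumption.

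The principal obstacle - and indeed the only one that remained after \cite{PT} - was precisely the polynomial Galois orbit lower bound, which Theorem \ref{lower} now supplies unconditionally. Since the dependence on $b_g$ in \cite[Theorem 7.1]{PT} is only through the existence of some positive constant $b_g$, no further quantitative work is needed, and Theorem \ref{AOAG} follows directly. A short expository section recalling more of this picture is deferred to Section 6, as advertised in the introduction.
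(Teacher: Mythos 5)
Your proposal matches the paper's proof exactly: Theorem \ref{AOAG} is deduced by a single invocation of \cite[Theorem 7.1]{PT}, which states that the Galois orbit lower bound of Theorem \ref{lower} implies the Andr\'e-Oort conjecture for $\A_g$. Your sketch of the Pila--Zannier machinery inside \cite{PT} also agrees with the expository outline the paper gives in its Section 6.
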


Finally, we point out that Ziyang Gao has recently shown \cite{G} that the Andr\'e-Oort conjecture for any mixed Shimura variety whose pure part is a 
Shimura subvariety of $\A_g$ follows from conjecture \ref{lower}, and thus is a consequence of theorem \ref{lowerprim}.

\section{Sketch of the complete proof of Andr\'e-Oort}

In this section, for the readers convenience we outline the proof of Theorem \ref{AOAG} from `first principles', quoting all the big results we need from the literature, pointing out the use of the main ingredients \cite{PT,PW} and Theorem \ref{lowerprimtop}.


\subsection{Setup}

Since CM points are algebraic, by replacing $V$ with the irreducible components of the Zariski closure of its CM points we may assume that $V$ is algebraic as well. Without loss of generality, $V$ is defined over some number field $L$. We adopt the notation $F\prec G$ if there exist constants $A,B>0$ such that $F\leq A\cdot G^B$.

We begin with the observation that $\A_g$ has a uniformization by the Siegel upper half plane $\Hh_g$ of symmetric $g\times g$ matrices whose imaginary part is positive definite. Namely, there is a covering map $\pi_g:\Hh_g\ra \A_g$, and an action of $\Gamma_g:=\Sp_{2g}(\Z)$ on $\Hh_g$ such that $\pi_g$ is invariant under the action of 
$\Gamma_g$ and induces an isomorphism $\pi_g:\Gamma_g\backslash\Hh_g\ra \A_g$. Now $\Hh_g$ is a symmetric space and there is a well-known fundamental domain $F_g\subset \Hh_g$ for the action of $\Gamma_g$ with nice properties, so that $\pi_g$ induces a homeomorphism from $F_g$ to an open dense subset of $\A_g$. 

\subsection{ Getting polynomially many points of small height in $\pi_g^{-1}(V)\cap F_g$}$\newline$

Now, even though the map $\pi_g$ is highly transcendental, the pullback under $\pi_g\mid_{F_g}$ of a CM point $x$ of $\A_g$ is an algebraic point $y$ of degree bounded by $2g$. Moreover, this pullback isn't `too big', in the sense that the naive height $H(y)$ is polynomially bounded in $|{\rm Disc}{R_x}|$ (see Theorem 3.1 in \cite{PT2}). Now take a CM point $x\in V$  and set  $X=H(y)$. Then the orbit of $x$ under the Galois group $\Gal(\Qa/L)$ is also in $V$. Now, since $L$ is fixed, we have
$$|\Gal(\Qa/L)\cdot x|\gg|\Gal(\Qa/\Q)\cdot x|.$$ Thus we have $$H(y) \prec |\rm{ Disc}{R_x}|\prec \Gal(\Qa/L)\cdot x $$ where the 2nd inequality follows from theorem \ref{lower}.  Finally, note that if $x'$ and $x$ are in the same Galois orbit then $R_{x'}\cong R_x$.

Thus, for any set $S$ letting $N(S,X)$ be the number of degree $2g$ algebraic points of $S$ whose height is at most $X$, we have shown that 
there are arbitrarily large $X$ such that $N(\pi_g^{-1}(V)\cap F_g,X)$ is bounded below by a fixed power of $X$.

\subsection{Getting an algebraic subvariety in $\pi_g^{-1}(V)\cap F_g$}$\newline$

Now, while $\pi_g^{-1}(V)\cap F_g$ is not an algebraic or even semialgebraic set, it turns out that it can be defined using subanalytic functions, together with the exponential function. That is to say, it is in the structure $\R_{an,exp}$ \cite{PEST}. This is in some sense a very weak property of a set, but it turns out to be surprisingly useful. The key is that $\R_{an,exp}$ is an o-minimal structure. We will not say more on this topic here, and the interested reader should see \cite{DM}. We shall only need the following extremely powerful theorem of Pila-Wilkie\cite{PW}: 

\begin{thm}

For any set $T$ in $\R_{an,exp}$, let $T^{alg}$ denote the union of all connected, positive dimensional semi-algebraic sets in $T$. Then $N(T\backslash T^{alg},X)$ grows subpolynomially in $X$. 

\end{thm}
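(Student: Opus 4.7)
The plan is to combine two ingredients: a \emph{reparameterization theorem} for definable sets in o-minimal structures, and the \emph{determinant method} of Bombieri--Pila for counting algebraic points on smooth pieces. Since $\R_{an,exp}$ is o-minimal, I would first use definability (Definable Choice, cell decomposition) to reduce the statement to a uniform count on a definable family containing $T$, so that all quantitative bounds hold uniformly in parameters and can be taken to apply to $T \setminus T^{alg}$ directly.

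The central technical step is the following reparameterization statement: for every integer $r \geq 1$, the set $T$ admits a cover by the images of finitely many definable maps $\phi_i : (0,1)^{\dim T_i} \to T$ whose partial derivatives up to order $r$ are bounded by $1$ in sup norm. This is the o-minimal analogue of the Yomdin--Gromov $C^r$ parameterization lemma; I would prove it by induction on dimension, repeatedly applying the cell decomposition theorem and rescaling cells whose higher derivatives blow up so that the new parameterizations are uniformly $C^r$-bounded.

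With such a parameterization in hand, the Bombieri--Pila determinant method applies in each chart. Partition $(0,1)^{\dim T_i}$ into boxes of side $\delta \asymp X^{-\lambda}$ for a carefully chosen $\lambda > 0$, and in each small box Taylor-approximate $\phi_i$ by a polynomial of degree $\ll r$. If more than a threshold number of rational points of height at most $X$ map into a single box, the Vandermonde-type determinant encoding their monomial values must vanish, forcing all such images to lie on a common algebraic hypersurface $H$ of degree bounded in terms of $r$.

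The proof is then completed by induction on $\dim T$. Rational points on $T \setminus T^{alg}$ landing on such an $H$ live in the definable set $T \cap H$: any positive-dimensional semi-algebraic component is absorbed into $T^{alg}$ and contributes nothing, while the remaining part is a proper definable set of strictly smaller dimension to which the inductive hypothesis applies. Balancing the number of boxes against the per-box contribution, and choosing $r$ large enough depending on $\epsilon$, yields $N(T \setminus T^{alg}, X) \ll_\epsilon X^\epsilon$. The main obstacle is the reparameterization step: making a Yomdin--Gromov style $C^r$-parameterization work uniformly in the full o-minimal setting requires delicate use of cell decompositions and careful bookkeeping of derivative bounds through the inductive rescaling, and this is really the technical heart of the argument.
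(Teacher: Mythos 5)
The paper does not actually prove this statement: it is the Pila--Wilkie counting theorem, quoted as a black box from \cite{PW}, so there is no internal proof to compare yours against. Your outline is, in substance, the strategy of the proof in the cited reference: an o-minimal Yomdin--Gromov $C^r$-reparameterization for definable families, the Bombieri--Pila determinant method on each chart to trap the points of height at most $X$ on polynomially few hypersurfaces of degree $d(\epsilon)$, and an induction on dimension in which any positive-dimensional semialgebraic piece of $T\cap H$ is absorbed into $T^{alg}$. As a sketch this is faithful, and you correctly identify the reparameterization as the technical heart. One point you pass over too quickly: the induction has to be run uniformly over the definable family $\{T\cap H\}_{H}$ as $H$ ranges over the (polynomially many, $X$-dependent) hypersurfaces produced by the determinant method, and one must handle the degenerate case $\dim(T\cap H)=\dim T$; this is precisely why the theorem is stated and proved for definable families from the outset rather than for a single set $T$.

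There is, however, one concrete gap relative to the statement as it is used in this paper. In the preceding subsection, $N(S,X)$ was defined to count \emph{algebraic points of degree $2g$} and height at most $X$, not rational points: the preimage of a CM point under $\pi_g\mid_{F_g}$ is an algebraic point of degree up to $2g$. Your determinant-method step is formulated for rational points only. The bounded-degree generalization is a separate, later theorem of Pila (\emph{On the algebraic points of a definable set}, 2009); it does not follow formally from the rational-point case, and requires either running the determinant method on a point together with its conjugates or the finer ``blocks'' refinement of the counting theorem. As written, your argument establishes the $\Q$-points version, which is strictly weaker than what the application to Galois orbits of CM points in this paper requires.
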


Combining the above theorem with the result of the previous section, it follows that $\pi_g^{-1}(V)\cap F_g$ contains a semi-algebraic set $W$.

\subsection{Getting a special subvariety in $V$}$\newline$

We are now in the strange situation that the pullback of an algebraic set $\pi_g^{-1}(V)$ contains a semi-algebraic set $W$, even though $\pi_g$ is transcendental. It turns out that this canonly happen if a special subvariety is involved. Formally, $V$ contains a special subvariety $S$ such that $\pi_g^{-1}(S)$ contains $W$. This is known as the hyperbolic-Ax-Lindemann theorem, and is the main result of \cite[Theorem 6.1]{PT}.

\subsection{Finishing up} 

If one goes through the above arguments carefully, we have proved that all but a finite number of the CM points on $V$ lie on a positive-dimensional special subvariety of $V$. Through slightly more complex but similar arguments, one can show that all but a finite number of the special subvarieties of $V$ lie on higher dimensional special subvarieties of $V$. Theorem \ref{AOAG} follows easily from this.


\begin{thebibliography}{100}

\bibitem{AGHM} F.Andr\'eatta, E-Z.Goren, B.Howard and K.Madapusi-Pera, Faltings heights of abelian varieties with complex multiplication,
http://arxiv.org/abs/1508.00178
\newline

\bibitem{B}  Bost, J.–B.,
Arakelov geometry of abelian varieties, Proceedings of a conference on arithmetic
geometry, technical report, Max-Planck Institut f\"ur Mat., Bonn, Vol. \textbf{96–51} (1996)
\newline

\bibitem{C} P.Colmez,
Periodes des Varietes Abeliennes a Multiplication Complexe, {\it Annals of Mathematics}
Second Series, Vol.{\bf 138}, No.3, 1993, 625--683.
\newline

\bibitem{DM}
L. van den Dries and C. Miller, On the real exponential field with
restricted analytic functions, {\it Israel J. Math.} {\bf 85} (1994), 19--56.
\newline

\bibitem{EMO}
S.J. Edixhoven,B.J.J. Moonen and F. Oort, editors,
Open problems in algebraic geometry,
{\it Bull. Sci. Math.\/} {\bf 125} (2001), 1--22.
\newline

\bibitem{G}
Z.Gao, Towards the Andr\'e-Oort conjecture for mixed Shimura varieties: The Ax-Lindemann theorem and lower bounds for Galois orbits of special points,
{\it Jour.F\"ur.Reine.Angew} (2015), online
\newline

\bibitem{IK}
H.Iwaniec and E.Kowalski, {\bf Analytic Number Theory},
{\it Amer.Math.Soc} Vol. 53
\newline


\bibitem{KYAO}
B. Klingler and A. Yafaev, The Andr\'e-Oort  conjecture,
{\it Annals of Mathematics} {\bf 180} (2014), Issue 3, 867--925
\newline

\bibitem{MW} D. W. Masser and G. W\"ustholz, Factorization estimates for abelian varieties,
{\it Publications Mathématiques de l'IH\'ES}, 1995,
Vol.{\bf 81}, 5--24
\newline


\bibitem{JT}
J. Tsimerman, Brauer-Siegel for arithmetic tori and lower bounds for
Galois orbits of special points, {\it J. Amer. Math. Soc.}
{\bf 25} (2012), 1091--1117.
\newline

\bibitem{PEST}
Y. Peterzil and S. Starchenko, Definability of restricted theta
functions and families of abelian varieties,
{\it Duke. Math. J.\/} {\bf 162} (2013), 731--765.
\newline

\bibitem{PT} J. Pila and J. Tsimerman,
Ax-Lindemann for $\mathcal{A}_g$,
{\it Annals of Mathematics\/} {\bf 179} (2014), Issue 2, 659--681.
\newline

\bibitem{PT2}
J. Pila and J. Tsimerman,
The Andr\'{e}-Oort conjecture for the moduli space of abelian surfaces,
{\it Compositio\/} {\bf 149} (2013), 204--216.
\newline

\bibitem{PW}
J. Pila and A. J. Wilkie, The rational points of a definable set,
{\it Duke Math. J.\/} {\bf 133} (2006), 591--616.
\newline

\bibitem{ST} G.Shimura and Y.Taniyama,
{\bf Complex multiplication of abelian Varieties},
{\it Math.Soc.Japan} (1961)
\newline

\bibitem{UYAO}
E. Ullmo and A. Yafaev, Galois orbits and equidistribution of special
subvarieties: towards the Andr\'e-Oort conjecture, 
{\it Annals of Mathematics} {\bf 180} (2014), Issue 3, 823--865
\newline

\bibitem{YZ} X.Yuan and S.Zhang,
On the Average Colmez Conjecture, 
http://arxiv.org/abs/1507.06903



\end{thebibliography}
\end{document}